\title{Integer matrices that are not copositive have
certificates of less than quadratic complexity}
\author{Timo Hirscher
               }
\theoremstyle{break}
\newtheorem{theorem}{Theorem}[section]
\newtheorem{lemma}{Lemma}
\newtheorem*{remark}{Remark}
\newenvironment{proof}{{\sc Proof:}}{\vspace{-1em}~\hfill $\square$\vspace{2em}}
\newenvironment{nproof}[1]{{\sc Proof #1:}}{\vspace{-1em}~\hfill $\square$\vspace{2em}}
\newcommand\N{\mathbb{N}}
\newcommand\R{\mathbb{R}}
\newcommand\Z{\mathbb{Z}}
\renewcommand\epsilon{\varepsilon}
\newcommand{\ve}[1]{\mathbf{#1}}
\newcommand{\sgn}{\mathop{\mathrm{sgn}}}
\definecolor{darkblue}{rgb}{0,0,.5}
\providecommand{\OO}[1]{\operatorname{O}\left(#1\right)}
\begin{document}
\newpage
\maketitle
\begin{abstract}
    A symmetric matrix $M$ in $\R^{n\times n}$ is called copositive if the
    corresponding quadratic form $Q(\ve{x})=\ve{x}^TM\ve{x}$ is
    non-negative on the closed first orthant $\R^n_{\geq 0}$. If
    the matrix fails to be copositive there exists some certificate
    $\ve{x}\in\R^n_{\geq 0}$ for which the quadratic form is negative.
    Due to the scaling property $Q(\lambda\,\ve{x})=\lambda^2\,Q(\ve{x})$ for
    $\lambda\in\R$, we can find such certificates in every neighborhood
    of the origin but their properties depend on $M$ of course and are
    hard to describe. If $M$ is an integer matrix however, we are
    guaranteed certificates of a complexity that is at most a constant times
    the binary encoding length of the matrix raised to the power $\tfrac32$.
\end{abstract}



\section{Introduction}

Let $M=(m_{ij})_{i,j}\in\R^{n\times n}$ be a symmetric real-valued matrix. As is known $M$ is called
positive semi-definite if all its eigenvalues are non-negative or equivalently the
corresponding quadratic form is non-negative, i.e.\ $Q(\ve{x}):=\ve{x}^TM\ve{x}\geq 0$ for all vectors
$\ve{x}\in\R^n$.

$M$ is in turn called {\itshape copositive} if this condition holds true for all vectors with 
non-negative entries, i.e.
 \begin{equation}\label{def}
   \ve{x}^TM\ve{x}\geq 0 \text{ for all vectors } \ve{x}\in\R_{\geq0}^n.
 \end{equation}
Obviously, positive semi-definite matrices are copositive, as are all symmetric non-negative
matrices, since $m_{ij}\geq 0$ for all $1\leq i,j\leq n$ implies (\ref{def}).

However, there are symmetric non-negative matrices, which are not positive semi-definite. 
Hence copositive matrices are a proper subset of all symmetric matrices (having negative diagonal
entries trivially renders copositivity impossible) and a proper superset of the positive 
semi-definite matrices in $\R^{n\times n}$ for $n\geq 2$. For $n=1$ positive semi-definiteness
and copositivity correspond to non-negativity and are thus equivalent.

$$M:=\left(
\begin{array}{cc} 0 & 1 \\ 1 & 0 \end{array}
\right)$$
has eigenvalues $\{-1,1\}$ but is non-negative showing that in $\R^{2\times 2}$ copositivity
is strictly weaker than positive semi-definiteness. Putting $M$ as upper left corner in an $n\times n$
zero matrix will establish the same for higher dimensions.

Murty and Kabadi showed that it is an NP-complete problem to decide whether a given symmetric
matrix is copositive or not. They actually showed this for integer matrices (see \cite{1}).

\section{Finding relatively simple certificates}

The main result to be established is the following statement about the complexity of certificates
for integer matrices which are not copositive:

\begin{theorem}\label{thm}
Let $M\in\Z^{n\times n}$ be a symmetric integer-valued matrix. If there exists some $\ve{x}\in\R_{\geq0}^n$
such that $\ve{x}^TM\ve{x}< 0$, i.e.\ a certificate for $M$ not being copositive, a vector $\ve{y}\in\R_{\geq0}^n$
can be found such that $\ve{y}^TM\ve{y}< 0$ and the binary encoding length of $\ve{y}$ is at most $17$ times
that of $M$ to the power $\tfrac32$.
\end{theorem}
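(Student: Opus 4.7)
By positive homogeneity of $Q$, the existence of a certificate $\ve{x}\in\R_{\geq 0}^n$ with $Q(\ve{x})<0$ is equivalent to the global minimum of $Q$ on the unit simplex $\Delta:=\{\ve{x}\in\R_{\geq 0}^n:\ve{1}^T\ve{x}=1\}$ being strictly negative. I would pick any minimizer $\ve{x}^*\in\Delta$ and let $S:=\{i:x_i^*>0\}$. The KKT stationarity conditions for this constrained quadratic problem read $M_{SS}\,\ve{x}_S^*=\mu\,\ve{1}_S$ for some scalar $\mu$, and contracting with $\ve{x}_S^*$ gives $\mu=Q(\ve{x}^*)<0$.

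Assume first that $M_{SS}$ is invertible. Then $\ve{x}_S^*=\mu\,M_{SS}^{-1}\ve{1}_S$, and since $\ve{x}_S^*>0$ and $\mu<0$, the entries of $-M_{SS}^{-1}\ve{1}_S$ are all strictly positive. Clearing denominators by multiplying with $|\det M_{SS}|$ produces an integer vector $\ve{y}\in\Z_{\geq 0}^n$ supported on $S$ whose $S$-block is (up to a global sign) $\mathrm{adj}(M_{SS})\ve{1}_S$; a direct computation gives $Q(\ve{y})=(\det M_{SS})^2\cdot\ve{1}_S^T M_{SS}^{-1}\ve{1}_S=(\det M_{SS})^2/\mu<0$, confirming that $\ve{y}$ is an integer certificate. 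To dispose of the singular case I would exploit symmetry: from KKT, $\ve{1}_S\in\mathrm{range}(M_{SS})=\ker(M_{SS})^\perp$, so every $\ve{w}\in\ker(M_{SS})\setminus\{\ve{0}\}$ is orthogonal to $\ve{1}_S$, and a short calculation shows $Q$ is constant along the line $\ve{x}^*+t\,\hat{\ve{w}}$ within $\Delta$, where $\hat{\ve{w}}$ extends $\ve{w}$ by zeros outside $S$. Following this line in the direction that first forces an $S$-coordinate to vanish strictly shrinks $|S|$; iterating at most $n$ times reduces to the invertible case.

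For the complexity bound, Cramer's rule expresses each $|y_i|$ as $|\det M_{SS}^{(i)}|$, where $M_{SS}^{(i)}$ is obtained from $M_{SS}$ by replacing column $i$ with $\ve{1}_S$. Hadamard's inequality in the crude form $|\det A|\leq\prod_k\|\mathrm{row}_k\|_1$, together with the elementary estimate $\log_2(1+\sum_\ell a_\ell)\leq\sum_\ell\log_2(1+a_\ell)$, bounds $\log_2|y_i|$ by $\sum_{k,\ell\in S}\log_2(1+|m_{k\ell}|)$, itself at most the encoding length $L$ of $M$. Summing the at most $|S|\leq n$ non-zero entries (each costing about $L$ bits) together with the trivially encoded zero block yields an encoding length for $\ve{y}$ of order $n+nL$. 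Under the standard convention $L\geq n^2$, one has $n\leq\sqrt{L}$, whence $nL\leq L^{3/2}$ and the whole bound sits comfortably inside $\OO{L^{3/2}}$.

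I expect the main obstacle to be the careful bookkeeping required to replace the $\OO{\cdot}$ by the explicit constant $17$: one has to account for sign bits, a logarithmic $|S|$-type overhead in converting a value bound to a bit-length bound, and the zero-block encoding, all under the precise binary encoding-length convention in use. The singular-$M_{SS}$ reduction must also be made rigorous enough to guarantee termination in the invertible case, though the symmetry identity $\mathrm{range}(M_{SS})=\ker(M_{SS})^\perp$ does most of that work.
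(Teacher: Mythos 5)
Your argument is sound and reaches the theorem by a genuinely different route than the paper. The paper minimizes $Q$ over the box $[0,1]^n$, passes via LP duality to a linear complementarity system, extracts a basic feasible solution, and uses Cramer's rule on the resulting $2n\times 2n$ basis matrix only to bound the optimal value above by $-2^{-2L+1}$; the certificate is then obtained by scaling the (real) minimizer by $2^{2L-1}$ and rounding it to a grid of spacing $\tfrac{1}{4dn^2}$, with a perturbation estimate (using that the eigenvalues of $M$ are bounded by $dn$) showing negativity survives the rounding. You instead minimize over the simplex, use KKT to get $M_{SS}\ve{x}^*_S=\mu\ve{1}_S$ with $\mu=Q(\ve{x}^*)<0$, and clear denominators to produce an \emph{exact integer} certificate $\pm\,\mathrm{adj}(M_{SS})\ve{1}_S$; your kernel-direction support reduction plays the same structural role as the paper's walk to an extreme point in its Lemma~2 (both move along objective-preserving lines until a coordinate vanishes), and it does terminate, since $|S|=1$ with $M_{SS}$ singular would force $m_{ii}=0$ and hence $Q(\ve{x}^*)=0$. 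Your route avoids the rounding step entirely, yields an integer rather than a rational certificate, and once the bookkeeping is done gives a \emph{better} constant: with the paper's convention $L=\sum_{i\leq j}\bigl(\lceil\log_2(|m_{ij}|+1)\rceil+1\bigr)$ one gets $|y_i|=|\det M_{SS}^{(i)}|\leq\prod_{k,\ell\in S}(1+|m_{k\ell}|)\leq 2^{2L-1}$ (note the double product counts off-diagonal entries twice, so the exponent is $2L-1$, not $L$ as you wrote --- a harmless factor of two), hence an encoding length of at most $n\,(2L+1)\leq\sqrt{2L}\,(2L+1)\leq 3\sqrt{2}\,L^{3/2}<17\,L^{3/2}$. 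The only remaining care points are the ones you already flagged, plus the observation that after each support reduction the new point is still a global minimizer on the simplex, so KKT can legitimately be re-applied with the smaller support.
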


On the way to achieve this result a couple of lemmas are needed, some of which were already 
sketched by Murty and Kabadi.

\begin{lemma}
Given $M\in\R^{n\times n}$, a symmetric real-valued matrix, let us define the
following minimization problem:
\begin{flalign}\label{min}\hspace*{10mm}\begin{split}\begin{array}{l l} \text{minimize}   & Q(\ve{x})=\ve{x}^TM\ve{x}\\
                  \text{subject to} & \ve{x}\in[0,1]^n.
\end{array}\end{split}&\end{flalign}
For an optimal solution $\ve{\bar{x}}$ to {\upshape(\ref{min})}, there exist vectors
$\ve{\bar{y}},\ve{\bar{u}},\ve{\bar{v}}\in\R_{\geq0}^n$ such that
\begin{equation}\label{cl}\left(\begin{array}{c}\ve{\bar{u}}\\\ve{\bar{v}}\end{array}\right) -
 \left(\begin{array}{cc} M & I\\-I& 0\end{array}\right)\cdot
 \left(\begin{array}{c}\ve{\bar{x}}\\\ve{\bar{y}}\end{array}\right)=
 \left(\begin{array}{c}\ve{0}\\\ve{e}\end{array}\right)\quad\text{and }
 \end{equation} 
\begin{equation}\label{cl2}
 (\,\ve{\bar{u}}^T\,,\ \ve{\bar{v}}^T\,)\cdot 
 \left(\begin{array}{c}\ve{\bar{x}}\\\ve{\bar{y}}\end{array}\right)=0,
 \end{equation}
 where $I$ denotes the $n\times n$ identity matrix and $\ve{e}\in\R^n$ the vector of all ones.
\end{lemma}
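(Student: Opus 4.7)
The equations (\ref{cl}) and (\ref{cl2}) are exactly the KKT / linear complementarity system associated with the box-constrained program (\ref{min}): unpacking the block matrix equation one obtains $\bar{u} = M\bar{x}+\bar{y}$ and $\bar{v} = \bar{e}-\bar{x}$, and since all four vectors $\bar{u},\bar{v},\bar{x},\bar{y}$ are required to be non-negative, the single scalar equation (\ref{cl2}) is equivalent to the componentwise complementarities $\bar{u}_i\bar{x}_i = 0$ and $\bar{y}_i(1-\bar{x}_i) = 0$ for every $i$. In particular the choice $\bar{v} := \bar{e}-\bar{x}$ is forced on us, is non-negative because $\bar{x}\in[0,1]^n$, and the task reduces to exhibiting $\bar{u},\bar{y}\in\R_{\geq 0}^n$ with $\bar{u} = M\bar{x}+\bar{y}$ and the two complementarity relations above.

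My plan is to extract these from the first-order necessary conditions at $\bar{x}$. A minimizer exists by compactness of $[0,1]^n$ and continuity of $Q$. Fixing all coordinates of $\bar{x}$ except the $i$-th, the resulting one-variable restriction of $Q$ is a quadratic in $t\in[0,1]$ whose derivative at $t=\bar{x}_i$ equals $2(M\bar{x})_i$ (using symmetry of $M$). Local optimality of $\bar{x}$ along this coordinate segment then pins down the sign of $(M\bar{x})_i$ according to which face of the box $\bar{x}$ lies on: $(M\bar{x})_i\geq 0$ if $\bar{x}_i=0$, $(M\bar{x})_i\leq 0$ if $\bar{x}_i=1$, and $(M\bar{x})_i=0$ if $0<\bar{x}_i<1$.

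Guided by these three sign regimes I would then define, componentwise,
\[
\bar{u}_i := \max\{(M\bar{x})_i,\,0\},\qquad \bar{y}_i := \max\{-(M\bar{x})_i,\,0\}.
\]
By construction $\bar{u}_i,\bar{y}_i\geq 0$ and $\bar{u}_i - \bar{y}_i = (M\bar{x})_i$, which is exactly $\bar{u} = M\bar{x}+\bar{y}$ and thus (\ref{cl}). A short case check in each of the three sign cases confirms $\bar{u}_i\bar{x}_i = \bar{y}_i(1-\bar{x}_i) = 0$, so (\ref{cl2}) also holds.

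The only substantive step is the sign analysis of $(M\bar{x})_i$ on each boundary face of the cube; once that is in hand, the definitions of $\bar{u}$, $\bar{y}$ and $\bar{v}$ follow mechanically. One small bookkeeping subtlety worth flagging is that a textbook Lagrangian derivation would produce a factor of $2$ in front of $M\bar{x}$, but because $\bar{u}$ and $\bar{y}$ are only constrained up to positive scaling, the pair $(\bar{u}/2,\bar{y}/2)$ can always be used instead in order to land in the clean form $\bar{u} = M\bar{x}+\bar{y}$ required by the statement.
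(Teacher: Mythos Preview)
Your proof is correct and takes a more elementary route than the paper. The paper proceeds by invoking a theorem of Murty to the effect that the quadratic optimizer $\ve{\bar{x}}$ is also optimal for the linearized LP with cost vector $-M\ve{\bar{x}}$, then passes to the cone-program dual and uses strong duality to produce the dual optimizer $\ve{\bar{y}}$; the vectors $\ve{\bar{u}}=M\ve{\bar{x}}+\ve{\bar{y}}$ and $\ve{\bar{v}}=\ve{e}-\ve{\bar{x}}$ are then the respective slacks, and (\ref{cl2}) is precisely the strong-duality equality. You instead read off the sign of $(M\ve{\bar{x}})_i$ directly from one-variable optimality along each coordinate edge of the cube and set $\bar{u}_i,\bar{y}_i$ to be the positive and negative parts of $(M\ve{\bar{x}})_i$. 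The resulting $\ve{\bar{y}}$ is in fact the same---solving the paper's dual componentwise also yields $\bar{y}_i=\max\{-(M\ve{\bar{x}})_i,0\}$---but your argument is self-contained and needs no external citations. What the duality detour buys the paper is that the identity $-\ve{\bar{x}}^TM\ve{\bar{x}}=\ve{e}^T\ve{\bar{y}}$, used repeatedly in the subsequent lemmas, is immediately visible as equality of primal and dual optimal values; in your framework one recovers it by multiplying (\ref{cl}) on the left by $(\ve{\bar{x}}^T,\ve{\bar{y}}^T)$ and invoking (\ref{cl2}), which is just as easy but slightly less conceptual.
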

\begin{proof} First of all, $[0,1]^n$ is bounded and closed, hence compact, and the quadratic form
$Q$ a continuous function on $\R^n$. Therefore it attains its minimum $\gamma:=\min_{\ve{x}\in[0,1]^n}Q(\ve{x})\in\R$
and (\ref{min}) has an optimal solution. 

\noindent Let $\ve{\bar{x}}\in[0,1]^n$ be such that $Q(\ve{\bar{x}})=\gamma$.
From quadratic programming, it is known that an optimal solution $\ve{\bar{x}}$ to the quadratic program
\begin{flalign}\label{QP}\hspace*{10mm}\begin{split}\begin{array}{l l} \text{minimize}   & 
                        Q(\ve{x})=\ve{c}^T\ve{x}+\tfrac12\ve{x}^TD\ve{x}\\
                  \text{subject to} & A\ve{x}\geq \ve{b}\\
                  \text{and} & \ve{x}\in \R_{\geq0}^n,
\end{array}\end{split}&\end{flalign}
where $\ve{b},\ve{c}\in \R^n$ and $A,D\in\R^{n\times n}$, is also an optimal solution to the linear program
\begin{flalign}\label{LP}\hspace*{10mm}\begin{split}\begin{array}{l l} \text{minimize}   & 
                        (\ve{c}^T+\ve{\bar{x}}^TD)\ve{x}\\
                  \text{subject to} & A\ve{x}\geq \ve{b}\\
                  \text{and} & \ve{x}\in \R_{\geq0}^n,
\end{array}\end{split}&\end{flalign}
see for example Thm.\ 1.12 in \cite{2}. It is easy to check that (\ref{min}) is equivalent to
(\ref{QP}) if we choose $\ve{c}=\ve{0},\ D=M,\ A=-I$ and $\ve{b}=-\ve{e}$. Proceeding to the linear program, it is thus
equivalent to
\begin{flalign}\label{LP2}\hspace*{10mm}\begin{split}\begin{array}{l l} \text{maximize}   & 
                        -\ve{\bar{x}}^TM\ve{x}\\
                  \text{subject to} & -\ve{x}\geq -\ve{e}\\
                  \text{and} & \ve{x}\in \R_{\geq0}^n,
\end{array}\end{split}&\end{flalign}
consequently having the value $-\gamma$. Rewritten as cone program, this reads
\begin{flalign}\label{CP}\hspace*{10mm}\begin{split}\begin{array}{l l} \text{maximize}   & 
                        \langle-M\ve{\bar{x}},\ \ve{x}\rangle\\
                  \text{subject to} & \ve{e}-I\ve{x}\in \R_{\geq0}^n\\
                  \text{and} & \ve{x}\in \R_{\geq0}^n.
\end{array}\end{split}&\end{flalign}
Noting that we have interior points, e.g.\ $\ve{x}=\tfrac12\,\ve{e}$, and that the cone $\R_{\geq0}^n$
is self-dual allows for another transformation. Duality theory tells us that the dual problem
\begin{flalign}\label{DP}\hspace*{10mm}\begin{split}\begin{array}{l l} \text{minimize}   & 
                        \langle\ve{e},\ \ve{y}\rangle\\
                  \text{subject to} & I\ve{y}+M\ve{\bar{x}}\in \R_{\geq0}^n\\
                  \text{and} & \ve{y}\in \R_{\geq0}^n,
\end{array}\end{split}&\end{flalign}
is also feasible and has the same value $-\gamma$, see for example Thm.\ 4.7.1 in \cite{3}. If we
denote an optimal solution to the dual problem by $\ve{\bar{y}}$ and let $\ve{\bar{u}}:=\ve{\bar{y}}+M\ve{\bar{x}}$, 
$\ve{\bar{v}}:=\ve{e}-\ve{\bar{x}}$, we have indeed $\ve{\bar{u}},\ve{\bar{v}},\ve{\bar{x}},\ve{\bar{y}}\in\R_{\geq0}^n$,

\begin{equation*}\left(\begin{array}{c}\ve{\bar{u}}\\\ve{\bar{v}}\end{array}\right) -
 \left(\begin{array}{cc} M & I\\-I& 0\end{array}\right)\cdot
 \left(\begin{array}{c}\ve{\bar{x}}\\\ve{\bar{y}}\end{array}\right)=
 \left(\begin{array}{c}\ve{0}\\\ve{e}\end{array}\right)\quad\text{and}
\end{equation*}
\begin{equation*}
 (\,\ve{\bar{u}}^T\,,\ \ve{\bar{v}}^T\,)\cdot 
 \left(\begin{array}{c}\ve{\bar{x}}\\\ve{\bar{y}}\end{array}\right)
 =\ve{\bar{y}}^T\ve{\bar{x}}+\ve{\bar{x}}^TM\ve{\bar{x}}+\ve{e}^T\ve{\bar{y}}-\ve{\bar{x}}^T\ve{\bar{y}}
 =\gamma-\gamma=0,
\end{equation*}
which establishes the claim.
\end{proof}

\vspace{0.5em}\noindent
For a system of linear equations in non-negative variables such as
\begin{equation}\label{BFS}\begin{split}
A\ve{s}=\ve{b}\ \\
\ve{s}\in\R^l_{\geq0}\end{split}
\end{equation}
where $A=(A_1,\dots,A_l)\in\R^{k\times l},\ \ve{b}\in\R^k$, a vector $\ve{s}\in\R^l$ is called a solution
if $A\ve{s}=\ve{b}$, feasible if $\ve{s}\in\R^l_{\geq0}$ and a {\itshape basic feasible solution} (abbreviated:
BFS) if it satisfies (\ref{BFS}) and the set of columns $\{A_j,\; s_j>0\}$ is linearly independent. Thm.\ 3.1
in \cite{4} states that the basic feasible solutions are precisely the extreme points of the convex
set of feasible solutions.

\begin{lemma}
Let $M\in\R^{n\times n}$ again be a symmetric real-valued matrix and consider the system
of linear equations in non-negative variables
\begin{equation}\label{le}
A\ve{s}=\ve{b},\text{ where } A:={\setlength{\extrarowheight}{3.5pt}   
\left(\begin{array}{c:c:c}  -M & -I& \multirow{2}{*}{\ I\,}
\\ \cdashline{1-2} I& 0 
\end{array}\right)}\in\R^{2n\times4n},\ \ve{b}:=\left(\begin{array}{c}\ve{0}\\\ve{e}\end{array}\right)\in\R^{2n}.
\end{equation}
Then there exist $\ve{\bar{u}},\ve{\bar{v}},\ve{\bar{x}},\ve{\bar{y}}\in\R_{\geq0}^n$ such that
$$\ve{s}:=\left(\begin{array}{c}\ve{\bar{x}}\\\ve{\bar{y}}\\\ve{\bar{u}}\\\ve{\bar{v}}\end{array}\right)$$
is a BFS to (\ref{le}) and (\ref{cl2}) is also satisfied.
\end{lemma}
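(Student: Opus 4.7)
The plan is to start from the feasible complementary solution produced by the previous lemma and reduce it to a basic feasible solution while preserving the complementarity condition (\ref{cl2}).

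First I would verify that (\ref{le}) is merely a repackaging of (\ref{cl}): reading off the block structure of $A$, the equation $A\ve{s} = \ve{b}$ with $\ve{s}$ stacking $\ve{\bar{x}}, \ve{\bar{y}}, \ve{\bar{u}}, \ve{\bar{v}}$ as in the statement amounts precisely to the two identities $\ve{\bar{u}} - M\ve{\bar{x}} - \ve{\bar{y}} = \ve{0}$ and $\ve{\bar{x}} + \ve{\bar{v}} = \ve{e}$ of (\ref{cl}). Hence the previous lemma already supplies a feasible vector $\ve{s}^*$ of (\ref{le}) that additionally satisfies (\ref{cl2}). The crucial extra feature of $\ve{s}^*$ is that, since all of its entries are non-negative, the single scalar equation (\ref{cl2}) is equivalent to the pairwise identities $\bar{u}^*_i\,\bar{x}^*_i = \bar{v}^*_i\,\bar{y}^*_i = 0$ for every $i$. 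Equivalently, the support $S := \{j : s^*_j > 0\}$ never contains both partners of a complementary index pair.

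The main body of the argument is a standard extreme-point extraction tailored to preserve (\ref{cl2}). If the columns $\{A_j : j \in S\}$ are linearly independent, $\ve{s}^*$ is already a BFS and we are done. Otherwise pick a nonzero $\alpha \in \R^{4n}$ supported on $S$ with $A\alpha = \ve{0}$; after possibly flipping its sign, $\alpha$ has at least one strictly negative coordinate on $S$. The perturbation $\ve{s}^* + t\alpha$ then satisfies $A(\ve{s}^* + t\alpha) = \ve{b}$ for every $t \in \R$, and at the largest $t \geq 0$ for which $\ve{s}^* + t\alpha \geq \ve{0}$ at least one positive component of $\ve{s}^*$ is driven to zero. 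The new vector is therefore feasible with support strictly contained in $S$.

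Because the new support is still a subset of $S$, it also avoids every complementary pair, so (\ref{cl2}) is automatically preserved. Iterating this reduction at most $|S|$ times (in fact at most $2n$, since the pairwise complementarity forces $|S| \leq 2n$) produces a feasible solution whose active columns are linearly independent, i.e., a BFS of (\ref{le}) that continues to satisfy (\ref{cl2}). The only point beyond textbook BFS extraction is the observation that monotone shrinking of the support automatically preserves the complementarity condition, which is where the pairwise reformulation of (\ref{cl2}) under non-negativity actually pays off.
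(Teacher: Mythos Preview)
Your argument is correct and follows essentially the same route as the paper: start from the feasible complementary solution supplied by the previous lemma and carry out the standard support-shrinking reduction to a BFS, observing that the support only decreases so the pairwise form of (\ref{cl2}) is automatically preserved. The paper phrases the reduction in the language of extreme points and convex combinations rather than kernel directions, and along the way additionally verifies that the $\ve{\bar{x}}$-block remains optimal for (\ref{min}) --- a fact not asserted in the lemma statement but tacitly invoked in the proof of the next lemma, so you may want to note it as well.
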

\begin{proof}
First note that being a solution of (\ref{le}) is equivalent to (\ref{cl}). Furthermore, from (\ref{cl}) and
(\ref{cl2}) it follows (by multiplying (\ref{cl}) with $(\ve{\bar{x}}^T,\ve{\bar{y}}^T)$ from the left):
\begin{equation}\label{opt}-\ve{\bar{x}}^TM\ve{\bar{x}}=\ve{\bar{y}}^T\ve{e}.\end{equation}
With $\ve{\bar{x}}^{(0)}$ being an optimal solution to problem (\ref{min}) and the corresponding vectors
$\ve{\bar{u}}^{(0)},\ve{\bar{v}}^{(0)},\ve{\bar{y}}^{(0)}\in\R_{\geq0}^n$ defined as in the foregoing lemma, we know that
$$\ve{s}^{(0)}:=\left(\begin{array}{c}\ve{\bar{x}}^{(0)}\\\ve{\bar{y}}^{(0)}\\
                                      \ve{\bar{u}}^{(0)}\\\ve{\bar{v}}^{(0)}\end{array}\right)$$
is a solution to (\ref{le}) which also satisfies (\ref{cl2}). However, it is not guaranteed that this
is an extreme point in the set of feasible solutions.

If not, we proceed as follows:\\
Assume $\ve{s}^{(0)}$ is no extreme point, then there exist distinct feasible solutions $\ve{t}^{(1)},\ve{t}^{(2)}$ and
$\alpha\in(0,1)$ s.t.\ $\ve{s}^{(0)}=\alpha\,\ve{t}^{(1)}+(1-\alpha)\,\ve{t}^{(2)}$.
From (\ref{cl2}) and the non-negativity,
$\ve{\bar{u}}^{(0)},\ve{\bar{v}}^{(0)},\ve{\bar{x}}^{(0)},\ve{\bar{y}}^{(0)}\in\R_{\geq0}^n$,
it follows that only one coordinate in each of the pairs
$$\{(\bar{x}^{(0)}_i,\bar{u}^{(0)}_i),(\bar{y}^{(0)}_i,\bar{v}^{(0)}_i);\;1\leq i\leq n\}$$ can be strictly
positive. If one writes $$\ve{t}^{(i)}=:\left(\begin{array}{c}\ve{x}^{(i)}\\\ve{y}^{(i)}\\
\ve{u}^{(i)}\\\ve{v}^{(i)}\end{array}\right)\text{ for }i=1,2,$$
the non-negativity of $\ve{t}^{(1)},\ve{t}^{(2)}$ and $\alpha\in(0,1)$ together imply that
$s^{(0)}_j=0$ forces $t^{(1)}_j=t^{(2)}_j=0$. Hence the orthogonality relation from (\ref{cl2}) also holds for
both $\ve{t}^{(1)}$ and $\ve{t}^{(2)}$. Consequently, we get 
\begin{eqnarray*}(\ve{\bar{x}}^{(0)})^TM\ve{\bar{x}}^{(0)}&=&-\ve{e}^T\ve{\bar{y}}^{(0)}
=\alpha\cdot\big(-\ve{e}^T\ve{y}^{(1)}\big)+(1-\alpha)\cdot\big(-\ve{e}^T\ve{y}^{(2)}\big)\\
&=&\alpha\cdot\big((\ve{x}^{(1)})^TM\ve{x}^{(1)}\big)+(1-\alpha)\cdot\big((\ve{x}^{(2)})^TM\ve{x}^{(2)}\big)\\
&\geq&(\ve{\bar{x}}^{(0)})^TM\ve{\bar{x}}^{(0)}
\end{eqnarray*}
by optimality of $\ve{\bar{x}}^{(0)}$, where the first and third equality follow from the consideration in 
(\ref{opt}). This implies
$(\ve{x}^{(i)})^TM\ve{x}^{(i)}=(\ve{\bar{x}}^{(0)})^TM\ve{\bar{x}}^{(0)}$, i.e.\ $\ve{t}^{(1)},\ve{t}^{(2)}$ also
feature optimal solutions to (\ref{min}) in their first $n$ coordinates.

Define the line $l:\ \ve{s}(r)=\ve{s}^{(0)}+r\cdot(\ve{t}^{(2)}-\ve{t}^{(1)}),\ r\in\R$. Linearity guarantees
that every $$\ve{s}=\left(\begin{array}{c}\ve{x}\\\ve{y}\\\ve{u}\\\ve{v}\end{array}\right)
\in l\cap\R_{\geq0}^{4n}$$ is a feasible solution to (\ref{le}) which also satisfies (\ref{cl2}), because
$s^{(0)}_j=0$ implies $s_j=0$. As above, from this we can deduce
$$\ve{x}^TM\ve{x}=-\ve{e}^T\ve{y}=-\ve{e}^T\ve{\bar{y}}^{(0)}+r\cdot
\underbrace{(-\ve{e}^T\ve{y}^{(2)}+\ve{e}^T\ve{y}^{(1)})}_{=\,0}=(\ve{\bar{x}}^{(0)})^TM\ve{\bar{x}}^{(0)},$$
i.e.\ $\ve{x}$ is another optimal solution to (\ref{min}).

Since $\ve{t}^{(1)}\neq\ve{t}^{(2)},\ r$ can be chosen in such a way that $$\ve{s}=\ve{s}(r)\in\R_{\geq0}^{4n}
\text{ and there exists some index $j$ with }s_j=0\neq s^{(0)}_j.$$ Use this feasible solution to define 
$\ve{s}^{(1)}:=\ve{s}$. As $\ve{s}^{(0)}$ has not more than $2n$ non-zero
coordinates, this procedure (when iterated) must stop, yielding a point $\ve{s}^{(k)}\in\R_{\geq0}^{4n}$
that is an extreme point in the set of feasible solutions to (\ref{le}). It will also satisfy (\ref{cl2}), which
in turn implies that $\bar{\ve{x}}^{(k)}$ is again optimal for (\ref{min}).
\end{proof}

\begin{lemma}
Let $M\in\Z^{n\times n}$ now be a symmetric integer-valued matrix.
The optimal value in {\upshape(\ref{min})} is either 0 (iff $M$ is copositive) or at most $-2^{-2L+1}$, where
$L$ denotes the binary encoding length of $M$.
\end{lemma}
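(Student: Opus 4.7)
The plan is to first dispatch the ``iff'' statement, then drive the main inequality using the rationality supplied by the previous lemma. Since $\ve{0}\in[0,1]^n$ gives $Q(\ve{0})=0$, the optimal value $\gamma$ of~(\ref{min}) satisfies $\gamma\leq 0$. Copositivity of $M$ forces $\gamma\geq 0$ as well, hence $\gamma=0$. Conversely, if $\gamma=0$, then for any $\ve{x}\in\R_{\geq 0}^n\setminus\{\ve{0}\}$, rescaling by $\mu:=\max_i x_i$ places $\ve{x}/\mu$ in $[0,1]^n$, whence $\ve{x}^TM\ve{x}=\mu^2\cdot(\ve{x}/\mu)^TM(\ve{x}/\mu)\geq 0$, and $M$ is copositive.

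Now assume $\gamma<0$. By the previous lemma, an optimal $\bar{\ve{x}}$ occurs as the first $n$ coordinates of a BFS $\ve{s}^{(k)}$ of the integer linear system~(\ref{le}). The positions with positive entries in $\ve{s}^{(k)}$ have linearly independent columns in $A$, and one can complete them to a nonsingular $2n\times 2n$ submatrix $B$ of $A$. Since $\ve{s}^{(k)}$ vanishes outside this basis, Cramer's rule applied to $B\ve{s}_B=\ve{b}$ yields entries that are integers divided by $q:=|\det B|$, because $A$ and $\ve{b}$ have integer entries.

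The technical core is bounding $q$ against $L$. All columns of $A$ outside the leading $-M$-block are $\pm$ standard basis vectors (squared Euclidean norm $1$), while column $i$ of the $-M$-block has squared norm $1+\sum_j m_{ji}^2$. Hadamard's inequality, together with the elementary estimates $1+\sum_j a_j\leq \prod_j(1+a_j)$ for $a_j\geq 0$ and $1+m^2\leq(|m|+1)^2$, then gives
$$q^2\;\leq\;\prod_{i=1}^{n}\Bigl(1+\sum_{j=1}^n m_{ji}^2\Bigr)\;\leq\;\prod_{i,j}(|m_{ij}|+1)^2.$$
Under the standard convention that $\langle m\rangle=1+\lceil\log_2(|m|+1)\rceil$, so $|m|+1\leq 2^{\langle m\rangle-1}$, this telescopes to $q^2\leq 2^{2L-2n^2}$.

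Finally, writing $\bar{\ve{x}}=\tilde{\ve{x}}/q$ with $\tilde{\ve{x}}\in\Z_{\geq 0}^n$, one has $\gamma=\tilde{\ve{x}}^TM\tilde{\ve{x}}/q^2$ with integer numerator. Because $\gamma<0$, this numerator is at most $-1$, so $\gamma\leq -1/q^2\leq -2^{-2L+2n^2}\leq -2^{-2L+1}$ for every $n\geq 1$. I expect the main obstacle to be precisely this Hadamard/encoding-length bookkeeping: the target $-2^{-2L+1}$ leaves only a single factor of $2$ of slack beyond the naive $-2^{-2L}$, and that factor must be extracted from the $n^2$ sign-bit overhead already present in $L$.
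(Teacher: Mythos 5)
Your overall architecture (reduce to a BFS of (\ref{le}), apply Cramer's rule, bound the resulting determinant against $L$) matches the paper's, and your treatment of the ``iff'' part is fine. But the quantitative half has a genuine gap, produced by two compounding losses. The first is structural: you pass from $\gamma=\tilde{\ve{x}}^TM\tilde{\ve{x}}/q^2$ with negative integer numerator to $\gamma\leq -1/q^2$, which squares the denominator and obliges you to prove $q^2\leq 2^{2L-1}$ rather than $q\leq 2^{2L-1}$. The paper avoids the square entirely via the complementarity identity $\bar{\ve{x}}^TM\bar{\ve{x}}=-\ve{e}^T\bar{\ve{y}}$ from (\ref{opt}): the optimal value is minus a sum of coordinates of the BFS itself, each nonzero coordinate equals $\det(B_i,b)/\det B$ with integer numerator, so a negative value forces some coordinate of $\bar{\ve{y}}$ to be at least $|\det B|^{-1}$ and hence $\gamma\leq-|\det B|^{-1}$, with the determinant to the \emph{first} power only.

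The second loss is in the telescoping itself. You infer $q^2\leq\prod_{i,j}(|m_{ij}|+1)^2\leq 2^{2\sum_{i,j}(\langle m_{ij}\rangle-1)}$ over all $n^2$ ordered pairs and then substitute $\sum_{i,j}\langle m_{ij}\rangle=L$; but the paper's $L$ in (\ref{com}) sums only over the upper triangle, so $\sum_{1\leq i,j\leq n}\langle m_{ij}\rangle$ is bounded by $2L$, not $L$. The claimed inequality $\prod_{i,j}(|m_{ij}|+1)^2\leq 2^{2L-2n^2}$ is in fact false: for $n=2$ and $M=\bigl(\begin{smallmatrix}0&-N\\-N&0\end{smallmatrix}\bigr)$ the left side is $(N+1)^4$ while $L=\lceil\log_2(N+1)\rceil+3$ makes the right side at most $(N+1)^2$. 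Correctly accounted, your Hadamard estimate yields $q\leq 2^{2L-n(n+1)}$ and hence only $q^2\leq 2^{4L-2n(n+1)}$, which exceeds the needed $2^{2L-1}$ as soon as $2L>2n(n+1)-1$, i.e.\ for essentially every matrix with nontrivial entries. The net effect is that your chain establishes $\gamma\leq-2^{-4L+2n(n+1)}$ --- still exponentially small in $L$, and enough for a worse-constant version of Theorem \ref{thm}, but not the stated bound $-2^{-2L+1}$. Both defects are repaired simultaneously by routing the argument through (\ref{opt}) as the paper does.
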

\begin{proof}
The statement about a copositive matrix $M$ follows directly from the definition and $\ve{0}^TM\ve{0}=0$. 

In the other case, let 
$$\ve{s}:=\left(\begin{array}{c}\ve{\bar{x}}\\\ve{\bar{y}}\\\ve{\bar{u}}\\\ve{\bar{v}}\end{array}\right)$$
be as guaranteed by the above lemma, which means among other things that $\ve{\bar{x}}$ is optimal for 
(\ref{min}). Since $\ve{s}=(s_j)_{j=1}^{4n}$ is a BFS to (\ref{le}), the set $\mathcal{S}:=\{A_j,\; s_j>0\}$ 
of not more than $2n$ columns is linearly independent. The matrix $A$ clearly has rank $2n$, we can thus 
choose additional columns to get a superset of $\mathcal{S}$ which forms a base of $\R^{2n}$. Deleting the
unchosen columns in $A$ and corresponding zero entries in $\ve{s}$ gives an invertible $2n\times 2n$ submatrix
$B$ of $A$ and a subvector $\ve{\tilde{s}}\in\R^{2n}_{\geq0}$ of $\ve{s}$ such that $B\ve{\tilde{s}}=\ve{b}$.
Cramer's rule applies and gives 
\begin{equation}\label{cram}\tilde{s}_i=\frac{\det(B_i,b)}{\det B},\end{equation}
where $(B_i,b)$ denotes the matrix in which the $i$th column of $B$ has been replaced by $\ve{b}$. The fact
that all entries in $A$, hence $B$, and $\ve{b}$ are integers, implies that the determinants in (\ref{cram}) are
integers too and $\det B\neq0$ as $B$ is invertible. Consequently, the non-zero entries of $\ve{s}$ are at
least $|\det B|^{-1}$ due to non-negativity and $\det(B_i,b)\in\Z$.

To finish off the proof of this lemma, whose final part is essentially a concatenation of ideas from section
15.2 in \cite{4}, it is left to show that $|\det B|\leq 2^{2L-1}$, since as in the foregoing lemma, we have
$\ve{\bar{x}}^TM\ve{\bar{x}}=-\ve{e}^T\ve{\bar{y}}$. So in the case of $M$ not being copositive, the left
hand side is negative forcing positive entries in $\ve{\bar{y}}$, hence $\ve{s}$ hence $\ve{\tilde{s}}$. Those
are in turn at least $|\det B|^{-1}$, which by non-negativity of $\ve{\bar{y}}$ implies 
$\ve{\bar{x}}^TM\ve{\bar{x}}\leq-|\det B|^{-1}$.\\[1em]
First of all, the binary encoding length of the original matrix has to be determined. Clearly, the symmetry
allows to encode the upper triangular part only. To store $M$ in the upper triangular part of an $n\times n$ array we need 
\begin{equation}\label{com}
L:=\sum_{1\leq i\leq j\leq n}\Big(\big\lceil \log_2(|m_{ij}|+1)\big\rceil+1\Big)
\end{equation}
bits, since $\lceil \log_2(|m_{ij}|+1)\rceil$ bits are needed to represent $|m_{ij}|$ if larger than $0$ and 
one bit for its sign, just one bit if $m_{ij}=0$.

Having the form of $A$ in mind (see (\ref{le})), we can expand the determinant of the submatrix $B$ 
with respect to first columns to the right then rows below $M$ in $A$
such that $\det(B)=\pm \det(B')$, where $B'$ is a $l\times l$ submatrix of $M$. If $S_l$ denotes the group
of permutations on $\{1,\dots,l\}$ and $B'=(b_{ij})_{i,j}$, one gets using Leibniz' formula:
\begin{eqnarray*}
|\det(B')|&=&\left|\sum_{\sigma\in S_l}(-1)^{\sgn(\sigma)}\ b_{1,\sigma(1)}\cdot\ldots\cdot b_{l,\sigma(l)}\right|\\
          &\leq&\sum_{\sigma\in S_l}|b_{1,\sigma(1)}|\cdot\ldots\cdot |b_{l,\sigma(l)}|
          \quad\leq\quad\prod_{i=1}^l (|b_{i1}|+\ldots+|b_{il}|)\\
          &\leq&\prod_{i=1}^n (|m_{i1}|+\ldots+|m_{in}|)
          \quad\leq\quad\prod_{1\leq i,j\leq n} (|m_{ij}|+1)\\
          &\leq&2^{\sum_{1\leq i,j\leq n} \log_2(|m_{ij}|+1)}\quad\leq\quad 2^{2L-1},
\end{eqnarray*}
where the last inequality follows directly from the consideration in (\ref{com}).
\end{proof}

Having prepared all those auxiliary results, we can finally proceed to proving the central conclusion.\\[1em]
\begin{nproof}{(of Thm.\ \ref{thm})}
To begin with, it is obvious that the complexity of $M$ is at least the number of entries necessary to represent
it in an array, i.e.\ 
$$L\geq\#\{(i,j),\;1\leq i\leq j\leq n\}=\tfrac {n\,(n+1)}{2},\quad\text{thus}\quad n\leq \sqrt{2L}.$$
Let $\bar{\ve{x}}$ be an optimal solution to (\ref{min}). By the lemma above, we know that the corresponding
value of the quadratic form is
$Q(\bar{\ve{x}})=\bar{\ve{x}}^TM\bar{\ve{x}}\leq-2^{-2L+1}$. Denote by $d:=\max_{i,j}|m_{ij}|$ the largest
entry of $M$ in terms of absolute value and note that $d\in\N$ since $M$ cannot be the zero matrix. 
Next, let us define $\ve{x}^{\ast}:=2^{2L-1}\cdot\bar{\ve{x}}$ and finally the vector $\ve{y}\in\R^n_{\geq0}$ by
\begin{equation}\label{y}
y_j:=\frac{1}{4dn^2}\big\lceil4dn^2\cdot x_j^\ast\big\rceil,\ \text{for }1\leq j\leq n.
\end{equation}
Let $||\,.\,||$ denote the Euclidean norm on $\R^n$. Due to $\bar{\ve{x}}\in[0,1]^n$ we get $||\bar{\ve{x}}||
\leq\sqrt{n}$, $||\ve{x}^{\ast}||\leq2^{2L-1}\,\sqrt{n}$ and clearly $L\geq\lceil\log_2(d+1)\rceil\geq\log_2d$.

Note that $\ve{y}$ is a non-negative rational vector and since every coordinate consists of an integer part in
$\{0,\dots,2^{2L-1}\}$ and a fractional part which is given by a numerator and denominator in $\{0,\dots,4dn^2\}$,
its binary complexity is not larger than
\begin{align}\label{cv}\begin{split}
n\,\Big(\big\lceil\log_2(2^{2L-1}+1)\big\rceil+2\,\big\lceil\log_2(4dn^2+1)\big\rceil\Big)\\
&\hspace{-5cm}\leq n\,\Big(2L+2\,\big(\log_2(4dn^2)+1\big)\Big)\\
&\hspace{-5cm}\leq \sqrt{2L}\,\Big(2L+2\,\big(2+\log_2d+(\log_2L+1)+1\big)\Big)\\
&\hspace{-5cm}\leq \sqrt{2L}\,(4L+2\log_2L+8)\\
&\hspace{-5cm}\leq \sqrt{2L}\cdot12\,L\\
&\hspace{-5cm}\leq 17\,L^{3/2}.\end{split}
\end{align}
In the before last line the simple estimate $\log x+1\leq x$ for $x\geq 0$ and $L\geq 1$ was used.\\[1em]
Finally, it has to be checked that $Q(\ve{y})<0$. The definitions and estimates from above give:
$$||\ve{y}||\leq\big|\big|\ve{x}^\ast+\tfrac{1}{4dn^2}\,\ve{e}\big|\big|\leq 2^{2L-1}\sqrt{n}+\tfrac{1}{4dn^{3/2}}
\quad\text{and}\quad ||\ve{y}-\ve{x}^\ast||\leq\tfrac{1}{4dn^{3/2}}.$$
Furthermore, the eigenvalues of $M$ are all of absolute value at most $dn$, since for every eigenvector
$\ve{v}=(v_i)_{i=1}^n$ corresponding to eigenvalue $\lambda$ the following holds:
$$|\lambda|=\frac{\max_i|(M\ve{v})_i|}{\max_i|v_i|}=\frac{\max_i\big|\sum_{j=1}^nm_{ij}\,v_j\big|}{\max_i|v_i|}
\leq \frac{\max_i\sum_{j=1}^nd\,|v_j|}{\max_i|v_i|} \leq dn.$$
Consequently, using these estimates we get:
\begin{eqnarray*}
\ve{y}^TM\ve{y}&=&\ve{y}^TM(\ve{y}-\ve{x}^\ast)+\ve{y}^TM\ve{x}^\ast\\
&=&\ve{y}^TM(\ve{y}-\ve{x}^\ast)+(\ve{y}-\ve{x}^\ast)^TM\ve{x}^\ast+(\ve{x}^\ast)^TM\ve{x}^\ast\\
&\leq&(2^{2L-1}\sqrt{n}+\tfrac{1}{4dn^{3/2}})\cdot\tfrac{dn}{4dn^{3/2}}+\tfrac{dn}{4dn^{3/2}}\cdot 2^{2L-1}\sqrt{n}
-2^{-2L+1}\cdot 2^{4L-2}\\
&\leq&(2^{2L-1}+\tfrac{1}{4d})\cdot\tfrac14+\tfrac14\cdot2^{2L-1}-2^{2L-1}\\
&\leq&2^{2L-1}\,(\tfrac12+\tfrac14-1)<0,
\end{eqnarray*}
where the last line follows from $d\geq1,\ L\geq1$.
\end{nproof}

\begin{remark}
\begin{enumerate}[(a)]
\item
  Choosing the discretization of $\ve{x}^\ast$ finer (i.e.\ with a spacing of $c\leq\tfrac{1}{4dn^2}$ in (\ref{y}))
  will make the above estimate only sharper, but at the same time increase the complexity. Choosing $l\in\N$
  minimal s.t.\ $2^l\geq 4dn^2$ and taking $2^{-l}$ as spacing however, allows to write the fractional part
  of each coordinate as a sum of negative powers of $2$, i.e.\ $\{2^{-1},\dots,2^{-l}\}$ and thus
  reducing the summand in the estimate for the binary complexity coming from the pair numerator/denominator 
  from $2\,\lceil\log_2(4dn^2)+1\rceil$ to $l=\lceil\log_2(4dn^2)+1\rceil$. This leads to an
  overall complexity of not more than $10\,L^{3/2}$.
   
\item To evaluate the sharpness of this result, let us consider the following example. Let $k\in\N$,
  $$M:=\left(\begin{array}{cc} 2^{2k+2} &-2^{k+2}\\ -2^{k+2}&3\end{array}\right)$$
  and $Q(\ve{x})=\ve{x}^TM\ve{x}$ be again the corresponding quadratic form.
  This means for $\ve{x}=(1,0)^T$ one gets the value $Q(\ve{x})=2^{2k+2}>0$ and for $\ve{x}=(x,1)^T$ correspondingly 
  $Q(\ve{x})=2^{2k+2}x^2-2^{k+3}x+3=4\,(2^kx-1)^2-1$.\\[0.1cm]
  
  \hspace*{7.1cm}\includegraphics{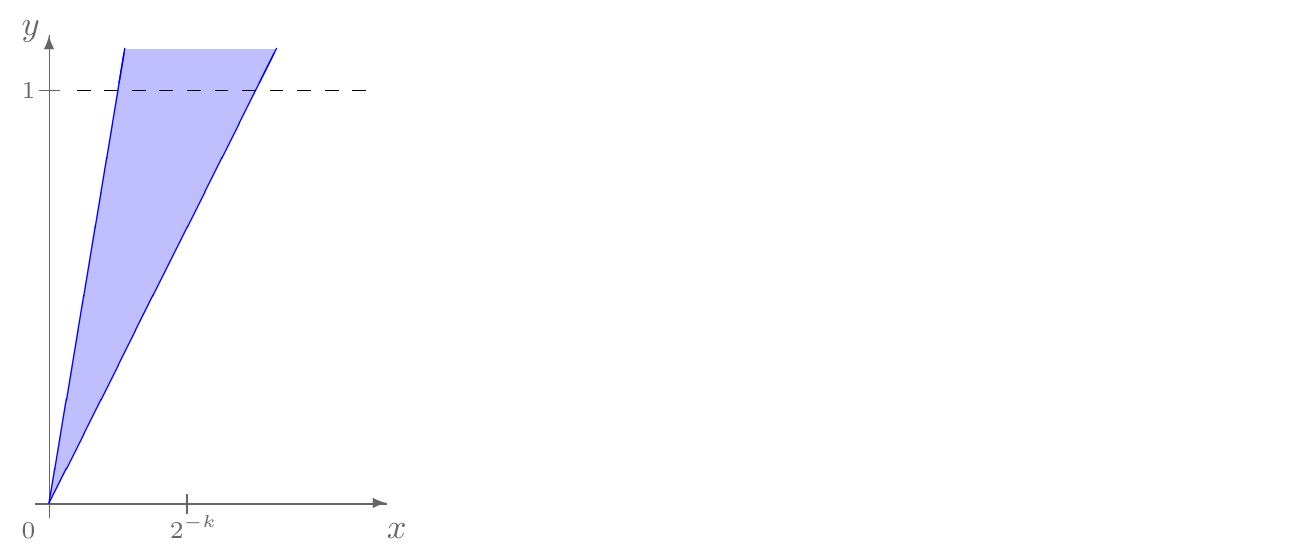}   
  \vspace*{-6.6cm}
  \par
   \begingroup
   \rightskip14em  The latter is smaller than $0$ if and only if $x\in(\tfrac{1}{2^{k+1}},\tfrac{3}{2^{k+1}})$.
                   Since $Q(\lambda\,\ve{x})=\lambda^2\,Q(\ve{x})$ for $\lambda\in\R$, this means that the 
                   certificates for $M$ not being copositive lie in the shaded area in the picture to the right.\\
                   This however implies that if we consider $\ve{y}=(p,q)^T\in\mathbb{Q}^2_{\geq 0}$, a certificate
                   with rational entries, either the denominator appearing in $q$ is at least $2^k$ or the product
                   of the integer part of $p$ and the denominator in $q$ is. Either way, the binary complexity
                   of $\ve{y}$ is at least $k+1$.
                   Another look at $M$ reveals that the binary encoding length of this matrix is according to
                   (\ref{com}) precisely\\[0.2cm]\hspace*{0.4cm} $L=(2k+4)+(k+3)+3=3k+10.$                   
   \par
   \endgroup
  Hence every certificate has a complexity which is at least linear in the encoding length of $M$.
\item Note that the extra factor $L^{1/2}$ in the estimate for the complexity (\ref{cv}) is coming from the size
  $n$ of the vector. If we fix the dimension, the result attained in Thm.\ \ref{thm} actually is that there exists a
  certificate with complexity at most $n\cdot 12\,L$, which is linear in $L$ and hence up to the constant factor
  tight according to the above example:\\ 
  Putting the matrix $M$ as the upper left corner of a zero matrix in
  $\R^{n\times n},$ for $n\geq 2$, will lead to an encoding length of $3k+10+\tfrac{n}{2}(n+1)-3$, since only the
  extra zeros in the upper triangular part have to be encoded. With $n$ fixed, the lower bound on the complexity
  of a certificate (which is $k+1+(n-2)$ by the same reasoning as above) is still linear in the complexity of $M$.
  
  If however $n$, which is known to be at most $\sqrt{2L}$, is not constant, i.e.\ in $\OO{L^0}$, but only in
  $\OO{L^\delta},\ \delta\in(0,\tfrac12],$ the established upper bound is superlinear, namely a constant times
  $L^{1+\delta}$, and it is not clear whether this is tight.
\end{enumerate}
\end{remark}


\vspace{0.5cm}

   {\sc \small \noindent Timo Hirscher\\
   Department of Mathematical Sciences,\\
   Chalmers University of Technology,\\
   412 96 Gothenburg, Sweden.}\\
   hirscher@chalmers.se

\end{document}